\documentclass[11pt]{article}%
\usepackage{amsmath}
\usepackage{amsfonts}
\usepackage{amssymb}
\usepackage{graphicx}%
\setcounter{MaxMatrixCols}{30}
\providecommand{\U}[1]{\protect\rule{.1in}{.1in}}
\newtheorem{theorem}{Theorem}

\newtheorem{definition}[theorem]{Definition}
\newtheorem{example}[theorem]{Example}

\newtheorem{remark}[theorem]{Remark}

\newenvironment{proof}[1][Proof]{\noindent\textbf{#1.} }{\ \rule{0.5em}{0.5em}}
\textwidth 165mm
\textheight 230.7mm
\oddsidemargin=0mm
\evensidemargin=0mm
\addtolength{\topmargin}{-1in}

\begin{document}

\title{A method for computation of scattering amplitudes and Green functions of whole
axis problems}
\author{Ra\'{u}l Castillo-P\'{e}rez$^{a}$, Vladislav V. Kravchenko$^{b}$, Sergii M.
Torba$^{b}$\\$^{a}$ {\small Instituto Polit\'{e}cnico Nacional, Maestr\'{\i}a en
Telecomunicaciones, ESIME Zacatenco, CDMX, Mexico}\\$^{b}$ {\small Departamento de Matem\'{a}ticas, Cinvestav, Unidad
Quer\'{e}taro, Quer\'{e}taro, Mexico;}}
\maketitle

\begin{abstract}
A method for the computation of scattering data and of the Green function for
the one-dimensional Schr\"{o}dinger operator $H:=-\frac{d^{2}}{dx^{2}}+q(x)$
with a decaying potential is presented. It is based on representations for the
Jost solutions in the case of a compactly supported potential obtained in
terms of Neumann series of Bessel functions (NSBF), an approach recently
developed in \cite{KNT 2015}. The representations are used for calculating a
complete orthonormal system of generalized eigenfunctions of the operator $H$
which in turn allow one to compute the scattering amplitudes and the Green function
of the operator $H-\lambda$ with $\lambda\in\mathbb{C}$.

\end{abstract}

\section{Introduction}

The Schr\"{o}dinger one-dimensional operator $H:=-\frac{d^{2}}{dx^{2}}+q(x)$
is considered with a potential $q$ defined on the whole axis and decaying at
infinity. A method for the computation of the egenfunctions and generalized
eigenfunctions as well as of scattering data and the Green function is
developed. The need for the computation of the Green function and scattering
data for this operator arises, e.g., in hydroacoustic problems (see, e.g.,
\cite{Barrera1} and \cite{Barrera2}) as well as when solving nonlinear
evolution equation by means of the inverse scattering transform method (see
\cite{Ablowitz}).

The simplest method for computing the Green function requires two linearly
independent solutions (the Jost solutions) of the Schr\"{o}dinger equation
(see, e.g., \cite{Faddeev}). However, the numerical realization of this method fails
when the spectral parameter is relatively large. Another approach is based on
the eigenfunction expansion of the Green function (see, e.g., \cite[Sect.
5.3]{Hanson}) in which both the discrete and the continuous parts of the
spectrum are represented. The computation of the generalized eigenfunctions
(corresponding to the continuous spectrum) is in general a complicated task.
In some cases the contribution of the discrete spectrum is considered
asymptotically dominant and consequently the contribution of the continuous
spectrum is neglected (see, e.g., \cite{Barrera1} and \cite{Barrera2}). In
this way an intrinsic error is introduced in the computation.

Here we deal with the problem of computation of the scattering data and
the Green function without neglecting the contribution of the continuous
spectrum eigenfunctions, improving thus the accuracy of the computation. For
this a recent result from \cite{KNT 2015} is used due to which the solutions
of the Schr\"{o}dinger equation
\[
-y^{\prime\prime}+q(x)y=\omega^{2}y
\]
are found in the form of Neumann series of Bessel functions (NSBF) admitting a
uniform error estimate of approximation with respect to
$\operatorname{Re}\omega$. This is especially convenient for computing the
integrals arising in the eigenfunction expansion of the Green function.

The numerical implementation of the method is discussed, and some numerical
illustrations are given.

The paper is organized as follows. In Section \ref{SectPreliminaries} we
recall some known facts about the Schr\"{o}dinger equation on the whole axis.
In Section \ref{SectNSBF} we recall the main results from \cite{KNT 2015} and
introduce a procedure for constructing solutions of the one-dimensional
Schr\"{o}dinger equation via the NSBF representation. In Section
\ref{SectJost} the representation is used for calculating the Jost solutions
for the Schr\"{o}dinger equation with a compactly supported potential.
In Section \ref{Sect improve} we use the asymptotic expansions of the Jost solutions to improve the convergency of the integrals arising in the eigenfunction expansion of the Green function. In
Section \ref{SectComputations} the numerical implementation of the algorithm
is discussed. Finally, some conclusions are provided.

\section{Preliminaries\label{SectPreliminaries}}

The one-dimensional Schr\"{o}dinger operator
\[
H:=-\frac{d^{2}}{dx^{2}}+q(x)
\]
is considered with $q$ being a real-valued measurable function satisfying the
condition%
\begin{equation}
\int_{-\infty}^{\infty}\left(  1+\left\vert x\right\vert \right)  \left\vert
q(x)\right\vert dx<\infty. \label{Cond q}%
\end{equation}
Under this condition the Schr\"{o}dinger equation%
\begin{equation}
Hy(x):=-y^{\prime\prime}(x)+q(x)y(x)=\omega^{2}y(x),\qquad\omega\in R
\label{Schr}%
\end{equation}
possesses the solutions $y_{1}(x,\omega)$ and $y_{2}(x,\omega)$ with the
following asymptotic behaviour at infinity%
\[
y_{1}(x,\omega)=e^{i\omega x}+o(1),\qquad x\rightarrow\infty
\]
and%
\[
y_{2}(x,\omega)=e^{-i\omega x}+o(1),\qquad x\rightarrow-\infty
\]
(see, e.g., \cite{Faddeev}). The functions $y_{1}(x,\omega)$ and
$y_{2}(x,\omega)$ are frequently referred to as the Jost solutions. They can be analytically extended onto $\operatorname*{Im}\omega\geq0$ with this asymptotics
\cite{Berezin}.

For real $\omega\neq0$ two fundamental sets of solutions can be constructed%
\[
\left\{  y_{1}(x,\omega),\quad y_{1}(x,-\omega)\right\}
\]
and
\[
\left\{  y_{2}(x,\omega),\quad y_{2}(x,-\omega)\right\}  .
\]
Note that
\[
y_{1}(x,-\omega)=\overline{y_{1}(x,\omega)}\qquad\text{and}\qquad y_{2}%
(x,-\omega)=\overline{y_{2}(x,\omega)}.
\]
These two fundamental sets are related by the equalities%
\[
y_{2}(x,\omega)=a(\omega)y_{1}(x,-\omega)+b(\omega)y_{1}(x,\omega),
\]%
\[
y_{2}(x,-\omega)=\overline{b(\omega)}y_{1}(x,-\omega)+\overline{a(\omega
)}y_{1}(x,\omega)
\]
where $a(\omega)$ and $b(\omega)$ are called the transmission and the
reflection coefficients respectively \cite{Berezin}. The coefficients satisfy
the equality \cite{Faddeev}%
\[
\left\vert a(\omega)\right\vert ^{2}=1+\left\vert b(\omega)\right\vert ^{2}.
\]
The transmission coefficient can be expressed in terms of the Wronskian of the
solutions $y_{1}(x,\omega)$ and $y_{2}(x,\omega)$,
\begin{equation}
a(\omega)=-\frac{1}{2i\omega}\left(  y_{1}^{\prime}(x,\omega)y_{2}%
(x,\omega)-y_{1}(x,\omega)y_{2}^{\prime}(x,\omega)\right)  . \label{a(k)}%
\end{equation}
For large $\omega$ the coefficients $a(\omega)$ and $b(\omega)$ satisfy the
following asymptotic relations \cite{Faddeev}, valid for all $\omega$ with
$\operatorname*{Im}\omega\geq0$,%
\begin{equation}\label{a(w) asympt}
a(\omega)=1+\frac{I_{q}}{2i\omega}+o\left(  \frac{1}{\left\vert \omega
\right\vert }\right)  \qquad\text{and}\qquad b(\omega)=o\left(  \frac
{1}{\left\vert \omega\right\vert }\right)
\end{equation}
where $I_{q}:=\int_{-\infty}^{\infty}q(x)dx$.

The transmission coefficient $a(\omega)$ may possess a finite number of simple
purely imaginary zeros $\omega_{1},\ldots\omega_{m}$ whose squares are the
eigenvalues of the operator $H$. Denote by $v_{1}(x),\ldots v_{m}(x)$ the
corresponding eigenfunctions of the discrete spectrum (their eigenvalues equal
$\omega_{j}^{2}$) with norm $1$ in $L_{2}(\mathbb{R})$. Together with the set
of solutions
\begin{equation}
\left\{  u_{1}(x,\omega):=\frac{y_{1}(x,\omega)}{\sqrt{2\pi}a(\omega)},\quad
u_{2}(x,\omega):=\frac{y_{2}(x,\omega)}{\sqrt{2\pi}a(\omega)}\right\}
\label{u12}%
\end{equation}
they form a complete orthonormal system of generalized eigenfunctions of the
operator $H$ \cite[Theorem 6.2]{Berezin}. The equality
\[
\int_{0}^{\infty}\left(  \overline{u_{1}(x,\omega)}u_{1}(y,\omega
)+\overline{u_{2}(x,\omega)}u_{2}(y,\omega)\right)  d\omega+\sum_{j=1}%
^{m}\overline{v_{j}(x)}v_{j}(y)=\delta(x-y)
\]
understood in a distributional sense is valid.

The Green function of the operator $H-\lambda$ admits the representation (see,
e.g., \cite[Sect. 5.3]{Hanson})
\begin{equation}
G(x,y;\lambda)=\sum_{j=1}^{m}\frac{v_{j}(x)\overline{v_{j}(y)}}{\lambda
_{j}-\lambda}+\int_{0}^{\infty}\frac{u_{1}(x,\omega)\overline{u_{1}(y,\omega
)}}{\omega^{2}-\lambda}d\omega+\int_{0}^{\infty}\frac{u_{2}(x,\omega
)\overline{u_{2}(y,\omega)}}{\omega^{2}-\lambda}d\omega, \label{Grepres}%
\end{equation}
$\lambda \not\in  [0,\infty)\cup \{ \lambda_j\}_{j=1}^m$.
In practice, the computation of the Green function is performed more
frequently by means of the formula (see, e.g., \cite{Faddeev})%
\begin{align}
G(x,y;\lambda)&=\frac{y_{1}(x,\sqrt{\lambda})y_{2}(y,\sqrt{\lambda})}%
{2i\sqrt{\lambda}a\left(  \sqrt{\lambda}\right)  },\qquad y<x,
\label{G via Two}\\
G(x,y;\lambda)&=G(y,x;\lambda) \label{G via Two 2}%
\end{align}
where $\lambda\in\mathbb{C}$ and the branch for $\sqrt{\lambda}$ is chosen so
that $\operatorname*{Im}\sqrt{\lambda}\geq0$. If $\lambda\neq\omega_{j}^{2}$
and $\lambda\neq0$ the following estimate holds%
\[
\left\vert G(x,y;\lambda)\right\vert \leq Ce^{-\operatorname*{Im}\sqrt
{\lambda}\left\vert x-y\right\vert }.
\]
\qquad\qquad

Often the Green function must be computed for $\lambda$ being large negative
numbers (see, e.g., \cite{Barrera1}, \cite{Barrera2}). In this case formula
(\ref{G via Two}) presents serious numerical difficulties due to the fact that
one of the solutions is exponentially increasing while the other is
exponentially decreasing, in both cases with $\sqrt{\lambda}$ participating in
the exponential order. In spite of a seemingly more complicated appearance,
formula (\ref{Grepres}) becomes more convenient for computation. However,
usually the integrals appearing in it are not computed due to the obvious
difficulty in calculating the solutions (\ref{u12}) for a large interval with
respect to $\omega$, and $G(x,y;\lambda)$ is approximated by the finite sum
$\sum_{j=1}^{m}\frac{v_{j}(x)\overline{v_{j}(y)}}{\lambda_{j}-\lambda}$ only
(see, e.g., \cite{Barrera1}, \cite{Barrera2}). Occasionally, in some practical
models of wave propagation, such approximation can be justified with the aid
of certain asymptotic considerations. Nevertheless, the interest in accurate
computing of the Green function remains, and in the present work we show that
formula (\ref{Grepres}) combined with the NSBF representation of solutions of
(\ref{Schr}) obtained in \cite{KNT 2015} offers a viable\ way to perform such computations.

\section{NSBF representation for solutions of the one-dimensional
Schr\"{o}dinger equation\label{SectNSBF}}

Throughout the paper we suppose that $f$ is a non-vanishing solution (in
general, complex-valued) of the equation
\begin{equation}
Hf=0 \label{SLhom}%
\end{equation}
satisfying the initial condition
\[
f(0)=1.
\]
Since $q$ is real valued, such $f$ can be chosen, e.g., as the following
combination $f_{0}+if_{1}$ of two solutions of (\ref{SLhom}) satisfying the
initial conditions
\begin{equation}
f_{0}(0)=1,\qquad f_{0}^{\prime}(0)=0 \label{f0}%
\end{equation}
and
\[
f_{1}(0)=0,\qquad f_{1}^{\prime}(0)=1.
\]
Denote $h:=f^{\prime}(0)$.

Consider two sequences of recursive integrals (see \cite{KrCV08},
\cite{KrPorter2010})
\[
X^{(0)}(x)\equiv1,\qquad X^{(n)}(x)=n\int_{0}^{x}X^{(n-1)}(s)\left(
f^{2}(s)\right)  ^{(-1)^{n}}\,\mathrm{d}s,\qquad n=1,2,\ldots
\]
and
\[
\widetilde{X}^{(0)}\equiv1,\qquad\widetilde{X}^{(n)}(x)=n\int_{0}%
^{x}\widetilde{X}^{(n-1)}(s)\left(  f^{2}(s)\right)  ^{(-1)^{n-1}}%
\,\mathrm{d}s,\qquad n=1,2,\ldots.
\]

\begin{definition}
\label{Def Formal powers phik and psik}The families of functions $\left\{
\varphi_{k}\right\}  _{k=0}^{\infty}$ and $\left\{  \psi_{k}\right\}
_{k=0}^{\infty}$ constructed according to the rules
\begin{equation}
\varphi_{k}(x)=%
\begin{cases}
f(x)X^{(k)}(x), & k\text{\ odd},\\
f(x)\widetilde{X}^{(k)}(x), & k\text{\ even}%
\end{cases}
\label{phik}%
\end{equation}
and
\begin{equation}
\psi_{k}(x)=%
\begin{cases}
\dfrac{\widetilde{X}^{(k)}(x)}{f(x)}, & k\text{\ odd,}\\
\dfrac{X^{(k)}(x)}{f(x)}, & k\text{\ even}.
\end{cases}
\label{psik}%
\end{equation}
are called the systems of formal powers associated with $f$.
\end{definition}

\begin{remark}
The formal powers arise in the spectral parameter power series (SPPS)
representation for solutions of \eqref{Schr} (see \cite{KKRosu},
\cite{KrCV08}, \cite{KMoT}, \cite{KrPorter2010}).
\end{remark}

Let $c(x,\omega)$, $s(x,\omega)$ denote the solutions of (\ref{Schr})
satisfying the initial conditions in the origin%
\begin{equation}
c(0,\omega)=1,\qquad c^{\prime}(0,\omega)=h,\qquad s(0,\omega)=0,\qquad
s^{\prime}(0,\omega)=\omega, \label{cs init}%
\end{equation}
where $h=f^{\prime}(0)\in\mathbb{C}$.

\begin{theorem}[\cite{KNT 2015}]
\label{Th Representation of solutions via Bessel} The solutions
$c(x,\omega)$ and $s(x,\omega)$ of \eqref{Schr} admit the following
representations%
\begin{equation}
c(x,\omega)=\cos\omega x+2\sum_{n=0}^{\infty}(-1)^{n}\beta_{2n}(x)j_{2n}%
(\omega x) \label{c}%
\end{equation}
and%
\begin{equation}
s(x,\omega)=\sin\omega x+2\sum_{n=0}^{\infty}(-1)^{n}\beta_{2n+1}%
(x)j_{2n+1}(\omega x), \label{s}%
\end{equation}
where $j_{m}$ stands for the spherical Bessel function of order $m$, the
functions $\beta_{n}$ are defined as follows
\begin{equation}
\beta_{n}(x)=\frac{2n+1}{2}\biggl(\sum_{k=0}^{n}\frac{l_{k,n}\varphi_{k}%
(x)}{x^{k}}-1\biggr), \label{beta direct definition}%
\end{equation}
where $l_{k,n}$ is the coefficient of $x^{k}$ in the Legendre polynomial of
order $n$. The series in \eqref{c} and \eqref{s} converge uniformly with
respect to $x$ on any segment and converge uniformly with respect to $\omega$
on any compact subset of the complex plane of the variable $\omega$. Moreover,
for the functions
\begin{equation}
c_{N}(x,\omega)=\cos\omega x+2\sum_{n=0}^{N}(-1)^{n}\beta_{2n}(x)j_{2n}(\omega
x) \label{cN}%
\end{equation}
and
\begin{equation}
s_{N}(x,\omega)=\sin\omega x+2\sum_{n=0}^{N}(-1)^{n}\beta_{2n+1}%
(x)j_{2n+1}(\omega x) \label{sN}%
\end{equation}
the following estimates hold%
\begin{equation}
\left\vert c(x,\omega)-c_{N}(x,\omega)\right\vert \leq2|x|\varepsilon
_{N}(x)\qquad\text{and}\qquad\left\vert s(x,\omega)-s_{N}(x,\omega)\right\vert
\leq2|x|\varepsilon_{N}(x) \label{estc1}%
\end{equation}
for any $\omega\in\mathbb{R}$, $\omega\neq0$, and
\begin{equation}
\left\vert c(x,\omega)-c_{N}(x,\omega)\right\vert \leq\frac{2\varepsilon
_{N}(x)\,\sinh(Cx)}{C}\qquad\text{and}\qquad\left\vert s(x,\omega
)-s_{N}(x,\omega)\right\vert \leq\frac{2\varepsilon_{N}(x)\,\sinh(Cx)}{C}
\label{estc2}%
\end{equation}
for any $\omega\in\mathbb{C}$, $\omega\neq0$ belonging to the strip
$\left\vert \operatorname{Im}\omega\right\vert \leq C$, $C\geq0$, where
$\varepsilon_N$ is a nonnegative function decreasing to zero as $N\to\infty$ (an estimate for $\varepsilon_{N}(x) $
is presented in \cite{KNT 2015}).
\end{theorem}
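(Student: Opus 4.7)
The plan is to combine the classical transmutation representation of the solutions with a Fourier–Legendre expansion of the transmutation kernel, thereby converting the integrals against $\cos\omega t$ and $\sin\omega t$ into series of spherical Bessel functions $j_n(\omega x)$.

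I would first invoke the Povzner–Levitan transmutation theorem: under \eqref{Cond q} the solutions $c$ and $s$ admit the integral representations
$$c(x,\omega) = \cos\omega x + \int_{0}^{x} K_{c}(x,t)\cos\omega t\, dt,\qquad s(x,\omega) = \sin\omega x + \int_{0}^{x} K_{s}(x,t)\sin\omega t\, dt,$$
with continuous Volterra kernels $K_c$, $K_s$ uniquely determined by Goursat problems whose data involve $q$ and $h=f'(0)$.

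Next I would expand each kernel, for fixed $x>0$, in the Legendre basis $\{P_n(2t/x-1)\}_{n\geq 0}$ on $[0,x]$, with the appropriate parity so that only even indices contribute in the cosine case and only odd indices in the sine case. The crucial identity $\int_{-1}^{1} P_n(u)\,e^{izu}\,du = 2\,i^{n}\,j_n(z)$, a direct consequence of the Bauer plane-wave expansion, converts each Legendre component into a spherical Bessel function; separating real and imaginary parts then produces the alternating signs $(-1)^{n}$ in \eqref{c} and \eqref{s}. To identify the resulting coefficients with the $\beta_n$ of \eqref{beta direct definition}, I would compare the Bessel series with the spectral parameter power series (SPPS) representation of $c$ and $s$, which expresses these solutions as Taylor series in $\omega$ whose coefficients are built from the formal powers $\varphi_k$ of Definition \ref{Def Formal powers phik and psik}. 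Because both representations are entire in $\omega$ for fixed $x$, equating Maclaurin coefficients after substituting the power series of $j_{m}(\omega x)$ determines the $\beta_n$ uniquely, and a combinatorial rearrangement involving the Legendre coefficients $l_{k,n}$ yields the closed formula \eqref{beta direct definition}.

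For the error estimates I would use the standard bounds on $|j_m(\omega x)|$—an $O(1/|\omega x|)$ bound for real $\omega\neq 0$ and a second bound of order $\sinh(C|x|)/(C|x|)$ in the strip $|\operatorname{Im}\omega|\leq C$—summed against a tail bound for the Fourier–Legendre coefficients of $K_c$ and $K_s$, to obtain \eqref{estc1} and \eqref{estc2} respectively. The principal obstacle is precisely this last ingredient: producing an $N$-explicit decay rate $\varepsilon_N(x)\to 0$ that is uniform in $\omega$ requires a regularity analysis of the transmutation kernels in the variable $t$, inherited from the integrability hypothesis \eqref{Cond q} on $q$, together with a quantitative bound on $\beta_n$ in terms of that smoothness. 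Transferring kernel regularity into a sharp, $\omega$-free tail estimate is where the bulk of the technical effort of \cite{KNT 2015} is concentrated, and it is precisely this uniformity in $\operatorname{Re}\omega$ that makes \eqref{c} and \eqref{s} well suited to the numerical computations in the remainder of the paper.
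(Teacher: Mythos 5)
The paper itself offers no proof of this theorem: it is imported verbatim from \cite{KNT 2015}, so the only comparison available is with the argument given there. Your reconstruction follows essentially the same route as that reference --- transmutation (Povzner--Levitan) representation, Fourier--Legendre expansion of the kernel, the Legendre--Bessel identity $\int_{-1}^{1}P_n(u)e^{izu}\,du=2i^nj_n(z)$, identification of the coefficients through the formal powers $\varphi_k$, and kernel-tail estimates for \eqref{estc1}--\eqref{estc2} --- so the overall strategy is the correct one.

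Three details deserve correction or sharpening. First, the basis $\{P_n(2t/x-1)\}$ on $[0,x]$ does not produce $j_n(\omega x)$: after the substitution $u=2t/x-1$ you get $\tfrac{x}{2}\int_{-1}^{1}P_n(u)e^{i\omega x(u+1)/2}du=x\,i^ne^{i\omega x/2}j_n(\omega x/2)$, i.e.\ half-argument Bessel functions with a spurious phase. The correct device is to work with the symmetrized kernels on $[-x,x]$ (even in $t$ for $c$, odd for $s$) expanded in $\{P_n(t/x)\}$; the parity of $P_n$ then selects even indices for the cosine transform and odd indices for the sine transform, and $\int_{-x}^{x}P_n(t/x)e^{i\omega t}dt=2x\,i^nj_n(\omega x)$ gives exactly the claimed arguments. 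Second, the identification of $\beta_n$ with \eqref{beta direct definition} is obtained much more directly from the mapping property of the transmutation operator, $x^k+\int_{-x}^{x}K(x,t)t^k\,dt=\varphi_k(x)$, which yields $\int_{-x}^{x}K(x,t)P_n(t/x)\,dt=\sum_{k}l_{k,n}x^{-k}\varphi_k(x)-P_n(1)$ and hence \eqref{beta direct definition} immediately (using $P_n(1)=1$); your proposed comparison of Maclaurin coefficients in $\omega$ could be made to work but requires justifying a rearrangement of a double series and obscures where the formula comes from. Third, the specific constants in \eqref{estc1} and \eqref{estc2} are most naturally obtained not by summing bounds on $|j_m|$ against coefficient tails but by writing the remainder as $\int_{-x}^{x}\bigl(K(x,t)-K_N(x,t)\bigr)\cos\omega t\,dt$ with $K_N$ the truncated Legendre expansion, and bounding $|\cos\omega t|$ by $1$ (real $\omega$) or by $\cosh(Ct)$ (strip $|\operatorname{Im}\omega|\le C$), which yields the factors $2x$ and $2\sinh(Cx)/C$ with $\varepsilon_N(x)=\sup_t|K(x,t)-K_N(x,t)|$. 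The genuinely hard step, as you correctly observe, is the quantitative decay of $\varepsilon_N$, which rests on the regularity of the kernel inherited from \eqref{Cond q} and is the technical core of \cite{KNT 2015}.
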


\begin{theorem}[\cite{KNT 2015}]
\label{Th Derivatives Schrod} The derivatives of the solutions
$c(x,\omega)$ and $s(x,\omega)$ with respect to $x$ admit the following
representations
\begin{equation}
c^{\prime}(x,\omega)=-\omega\sin\omega x+\left(  h+\frac{1}{2}\int_{0}%
^{x}q(s)\,ds\right)  \cos\omega x+2\sum_{n=0}^{\infty}(-1)^{n}\gamma
_{2n}(x)j_{2n}(\omega x) \label{c prime (omega)}%
\end{equation}
and
\begin{equation}
s^{\prime}(x,\omega)=\omega\cos\omega x+\frac{1}{2}\left(  \int_{0}%
^{x}q(s)\,ds\right)  \sin\omega x+2\sum_{n=0}^{\infty}(-1)^{n}\gamma
_{2n+1}(x)j_{2n+1}(\omega x) \label{s prime (omega)}%
\end{equation}
where $\gamma_{n}$ are defined as follows
\begin{equation}
\gamma_{n}(x)=\frac{2n+1}{2}\left(  \sum_{k=0}^{n}\frac{l_{k,n}}{x^k}\left(
k\psi_{k-1}(x)+\frac{f^{\prime}(x)\varphi_{k}(x)}{f(x)}\right)  -\frac
{n(n+1)}{2x}-\frac{1}{2}\int_{0}^{x}q(s)\,ds-\frac{h}{2}\left(  1+(-1)^{n}%
\right)  \right)  . \label{gamma n}%
\end{equation}
The series in \eqref{c prime (omega)} and \eqref{s prime (omega)} converge
uniformly with respect to $x$ on any segment and converge uniformly with
respect to $\omega$ on any compact subset of the complex plane of the variable
$\omega$.

Moreover, for the approximations
\begin{equation}
\overset{\circ}{c}_{N}(x,\omega):=-\omega\sin\omega x+\left(  h+\frac{1}%
{2}\int_{0}^{x}q(s)\,ds\right)  \cos\omega x+2\sum_{n=0}^{N}(-1)^{n}%
\gamma_{2n}(x)j_{2n}(\omega x) \label{cN prime}%
\end{equation}
and
\begin{equation}
\overset{\circ}{s}_{N}(x,\omega):=\omega\cos\omega x+\frac{1}{2}\left(
\int_{0}^{x}q(s)\,ds\right)  \sin\omega x+2\sum_{n=0}^{N}(-1)^{n}\gamma
_{2n+1}(x)j_{2n+1}(\omega x) \label{sN prime}%
\end{equation}
the following inequalities are valid%
\[
\left\vert c^{\prime}(x,\omega)-\overset{\circ}{c}_{N}(x,\omega)\right\vert
\leq2|x|\varepsilon_{1,N}(x)\qquad\text{and}\qquad\left\vert s^{\prime
}(x,\omega)-\overset{\circ}{s}_{N}(x,\omega)\right\vert \leq2|x|\varepsilon
_{1,N}(x)
\]
for any $\omega\in\mathbb{R}$, $\omega\neq0$, and%
\[
\left\vert c^{\prime}(x,\omega)-\overset{\circ}{c}_{N}(x,\omega)\right\vert
\leq\frac{2\varepsilon_{1,N}(x)\,\sinh(Cx)}{C}\qquad\text{and}\qquad\left\vert
s^{\prime}(x,\omega)-\overset{\circ}{s}_{N}(x,\omega)\right\vert \leq
\frac{2\varepsilon_{1,N}(x)\,\sinh(Cx)}{C}%
\]
for any $\omega\in\mathbb{C}$, $\omega\neq0$ belonging to the strip
$\left\vert \operatorname{Im}\omega\right\vert \leq C$, $C\geq0$, where
$\varepsilon_{1,N}$ is a nonnegative function decreasing to zero as $N\to\infty$.
\end{theorem}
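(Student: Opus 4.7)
The plan is to derive (\ref{c prime (omega)}) and (\ref{s prime (omega)}) by differentiating the NSBF expansions (\ref{c}) and (\ref{s}) termwise with respect to $x$, and to establish the error bounds by mimicking the tail estimate already used in the proof of Theorem~\ref{Th Representation of solutions via Bessel}.

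First I would differentiate the two series term by term. Using the recurrence $(2n+1)j_n'(z)=n\,j_{n-1}(z)-(n+1)\,j_{n+1}(z)$ together with the product rule, each summand $\bigl(\beta_n(x)\,j_n(\omega x)\bigr)'$ splits into $\beta_n'(x)\,j_n(\omega x)$ plus an $\omega$-independent linear combination of $j_{n-1}(\omega x)$ and $j_{n+1}(\omega x)$. Re-indexing and collecting terms of equal Bessel order, the parity structure of (\ref{c}) and (\ref{s}) is preserved, so the resulting series is again indexed by the even (respectively odd) spherical Bessel functions. Computing $\beta_n'(x)$ from (\ref{beta direct definition}) via the identities $X^{(n)\prime}(x)=n\,X^{(n-1)}(x)\,(f^{2})^{(-1)^{n}}$ and the analogous one for $\widetilde{X}^{(n)}$, the derivative coefficient is expressed in terms of $\psi_{k-1}(x)$, $\varphi_{k}(x)$ and $f'/f$. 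Matching with (\ref{gamma n}) then identifies this coefficient as $\gamma_n(x)$, while the low-index contributions produced by differentiating $\cos\omega x$, $\sin\omega x$ and the very first few $\beta_k j_k$ accumulate into the non-Bessel terms on the right-hand sides of (\ref{c prime (omega)}) and (\ref{s prime (omega)}), namely $-\omega\sin\omega x$, $\omega\cos\omega x$, and the $h$ and $\tfrac12\int_0^x q(s)\,ds$ corrections.

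The step I expect to be the main obstacle is justifying the termwise differentiation, which requires the differentiated series to converge uniformly on compacta. The difficulty is that $\beta_n'(x)$ acquires an extra factor of $n$ from the normalization $(2n+1)/2$ and from the Legendre coefficients $l_{k,n}$, while the Bessel recurrence introduces an additional factor of $n$ on $j_{n\pm 1}$; a naive bound would not yield convergence. The key point is that, in the precise combinations assembled into $\gamma_n$, these potentially divergent contributions cancel. This cancellation follows from the SPPS-type identities $\varphi_k'(x)-(f'(x)/f(x))\varphi_k(x)=k\,\psi_{k-1}(x)$ (and the companion identity for $\psi_k'$) together with the three-term recurrence for the Legendre polynomials; identifying this cancellation is precisely what reduces the derivative coefficient to the compact form (\ref{gamma n}).

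Once the cancellation has been established, the error estimates for $\overset{\circ}{c}_N(x,\omega)$ and $\overset{\circ}{s}_N(x,\omega)$ follow by the same argument used for (\ref{estc1}) and (\ref{estc2}): one applies the pointwise bounds on $|j_n(\omega x)|$ valid for real $\omega$ (respectively for $\omega$ in the strip $|\operatorname{Im}\omega|\le C$), sums the tail $\sum_{n>N}$ in (\ref{c prime (omega)}) and (\ref{s prime (omega)}), and absorbs the resulting bound into the definition of $\varepsilon_{1,N}(x)$. The decay of $\varepsilon_{1,N}(x)$ as $N\to\infty$ is inherited, after the algebraic cancellation above is taken into account, from the decay of the corresponding tail in Theorem~\ref{Th Representation of solutions via Bessel}.
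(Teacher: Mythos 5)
The paper itself does not prove Theorem \ref{Th Derivatives Schrod}: it is quoted with attribution to \cite{KNT 2015}, where the argument does not proceed by differentiating the series \eqref{c} and \eqref{s} term by term. There, $c(x,\omega)$ and $s(x,\omega)$ are written through the transmutation-operator integral representation with a kernel $K(x,t)$; differentiating that representation in $x$ gives $c^{\prime}(x,\omega)=-\omega\sin\omega x+K(x,x)\cos\omega x+\int_{0}^{x}\partial_{x}K(x,t)\cos\omega t\,dt$ (and similarly for $s$), the Goursat condition on the diagonal $K(x,x)$ produces exactly the non-Bessel terms $\bigl(h+\tfrac12\int_{0}^{x}q\bigr)\cos\omega x$ and $\tfrac12\bigl(\int_{0}^{x}q\bigr)\sin\omega x$, the NSBF part comes from the Fourier--Legendre expansion of $\partial_{x}K(x,\cdot)$ whose coefficients are the $\gamma_{n}$ of \eqref{gamma n}, and the bounds $2|x|\varepsilon_{1,N}(x)$ and $2\varepsilon_{1,N}(x)\sinh(Cx)/C$ follow by Cauchy--Schwarz from the $L^{2}$ truncation error of that expansion integrated against $\cos\omega t$ (bounded by $1$ for real $\omega$, by $e^{C|t|}$ in the strip). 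Your termwise-differentiation plan is a genuinely different route, and as written it has real gaps.

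Concretely: (i) differentiating $\beta_{n}(x)j_{n}(\omega x)$ in $x$ produces $\beta_{n}(x)\,\omega j_{n}^{\prime}(\omega x)$, and $\omega j_{n}^{\prime}(\omega x)$ is \emph{not} an $\omega$-independent combination of $j_{n\pm1}(\omega x)$: the factor $\omega$ survives (for instance $\omega j_{n}^{\prime}(\omega x)=n j_{n}(\omega x)/x-\omega j_{n+1}(\omega x)$), and these $\omega$-weighted terms have the wrong parity, so removing them requires recombination across infinitely many summands --- a rearrangement whose legitimacy is precisely what must be proved. Likewise, the coefficient $h+\tfrac12\int_{0}^{x}q$ is the diagonal value of the kernel and in any series derivation arises from summing contributions of \emph{all} $n$, not from ``the first few'' terms. (ii) The convergence of the differentiated series and the estimates for $\overset{\circ}{c}_{N}$, $\overset{\circ}{s}_{N}$ cannot simply be ``inherited'' from $\varepsilon_{N}$ in Theorem \ref{Th Representation of solutions via Bessel}: $\varepsilon_{1,N}$ controls the tail of a different coefficient sequence, the $\gamma_{n}$, and its decay encodes new information (regularity of the $x$-derivative of the kernel). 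Nothing in your argument establishes decay or summability of the $\gamma_{n}$, which is needed both to justify termwise differentiation and even to define $\varepsilon_{1,N}$. Moreover, bounding $|j_{n}(\omega x)|$ pointwise and summing the tail does not by itself yield the $\omega$-uniform bound for real $\omega$ nor the $\sinh(Cx)/C$ bound on the strip of the form stated after \eqref{cN prime}--\eqref{sN prime}; in the source these come from Cauchy--Schwarz applied to the kernel remainder, which is the step your sketch replaces by an assertion.
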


\begin{remark}
The inequalities \eqref{estc1} and \eqref{estc2} are of particular importance
when using representations \eqref{c} and \eqref{s} because they guarantee a
uniform ($\omega$-independent) error estimate for an approximation of solutions which was
illustrated by numerical experiments in \cite{KNT 2015}.
\end{remark}

\begin{remark}\label{ComputationBetaGamma}
For numerical implementation of the NSBF representations the formulas \eqref{beta direct definition} and \eqref{gamma n} are not the best options. Due to the fast growth of the Legendre polynomial's coefficients only few functions $\beta_n$ and $\gamma_n$ can be computed with sufficient accuracy. Better option consists in using the alternative recurrent formulas proposed in \cite[Section 6]{KNT 2015}.
\end{remark}

\section{Construction of the Jost solutions\label{SectJost}}

The condition (\ref{Cond q}) indicates a sufficiently fast decay of
$\left\vert q(x)\right\vert $ when $\left\vert x\right\vert \rightarrow\infty
$. This means that a compactly supported potential can be a reasonable
approximation of $q$. Let us consider the operator $H$ with a compactly
supported potential $q$, and without loss of generality suppose that
$\operatorname*{supp}q\subset\left[  0,d\right]  $, $d>0$. Then the solution
$y_{2}(x,\omega)$ satisfies the initial conditions
\begin{equation}\label{y2IC}
y_{2}(0,\omega)=1\qquad\text{and}\qquad y_{2}^{\prime}(0,\omega)=-i\omega,
\end{equation}
while the initial conditions for $y_{1}(x,\omega)$ at the point $d$ have the
form
\begin{equation}\label{y1IC}
y_{1}(d,\omega)=e^{i\omega d}\qquad\text{and}\qquad y_{1}^{\prime}%
(d,\omega)=i\omega e^{i\omega d}.
\end{equation}
We are looking for $y_{1}(x,\omega)$ and $y_{2}(x,\omega)$ in the form of
linear combinations of $c(x,\omega)$ and $s(x,\omega)$. Since
\[
c(0,\omega)=1,\qquad c^{\prime}(0,\omega)=h,\qquad s(0,\omega)=0,\qquad
s^{\prime}(0,\omega)=\omega
\]
we obtain
\begin{equation}
y_{1}(x,\omega)=\frac{e^{i\omega d}}{\omega}\Bigl(  \bigl(  s^{\prime
}(d,\omega)-i\omega s(d,\omega)\bigr)  c(x,\omega)+\bigl(  i\omega
c(d,\omega)-c^{\prime}(d,\omega)\bigr)  s(x,\omega)\Bigr)  \label{y1}%
\end{equation}
and%
\begin{equation}
y_{2}(x,\omega)=c(x,\omega)-is(x,\omega)-\frac{h}{\omega}s(x,\omega).
\label{y2}%
\end{equation}
Thus, we are in a position to use the NSBF representation for computing
$a(\omega)$ and the orthonormalized eigenfunctions of the continuous spectrum
(\ref{u12}) as well as the orthonormalized eigenfunctions of the discrete
spectrum $v_{1}(x),\ldots v_{m}(x)$ (which can also be calculated using the
SPPS method described in \cite{CastilloKrav}). Taking into account that
$a(\omega)$ can be singular in the origin it is convenient to calculate the
values of the functions (\ref{u12}) for $\omega=0$. This result we formulate
in the following statement.

\begin{theorem}
Let $q$ be real valued, $\operatorname*{supp}q\subset\left[  0,d\right]  $,
$d>0$ and $q\in C\left[  0,d\right]  $. Then the following asymptotic relation
is valid
\begin{equation}
a(\omega)=\frac{f_{0}^{\prime}(d)}{2i\omega}-\frac{1}{2}\left(  f_{0}%
(d)-df_{0}^{\prime}(d)+\varphi_{1}^{\prime}(d)\right)  +O(\omega
)\qquad\text{when }\omega\rightarrow0 \label{a(k) asympt f0}%
\end{equation}
where $f_{0}$ satisfies \eqref{SLhom} and \eqref{f0}.

The transmission coefficient $a(\omega)$ has a simple pole in the origin if
and only if zero is not a Neumann eigenvalue of the operator $H$ on $\left[
0,d\right]  $, and in this case%
\begin{equation}
u_{1}(x,0)=u_{2}(x,0)=0. \label{u1u2 origin}%
\end{equation}

Otherwise,
\begin{equation}
a(\omega)=-\frac{1}{2}\left(  f_{0}(d)+\varphi_{1}^{\prime}(d)\right)
+O(\omega)\qquad\text{when }\omega\rightarrow0, \label{a(k) asympt f0 special}%
\end{equation}
$f_{0}(d)+\varphi_{1}^{\prime}(d)\neq0$, and
\begin{equation}
u_{1}(x,0)=-\sqrt{\frac{2}{\pi}}\frac{f_{0}(x)}{f_{0}^{2}(d)+1}\qquad
\text{and}\qquad u_{2}(x,0)=-\sqrt{\frac{2}{\pi}}\frac{f_{0}(x)f_{0}(d)}%
{f_{0}^{2}(d)+1}. \label{u1u2 origin special}%
\end{equation}

\end{theorem}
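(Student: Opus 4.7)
The plan is to compute $a(\omega)$ directly from the Wronskian formula (\ref{a(k)}). Because the Wronskian is $x$-independent, I would evaluate at $x = d$, where the explicit Jost data (\ref{y1IC}) give
\[
a(\omega) = \frac{e^{i\omega d}}{2i\omega}\bigl(y_{2}^{\prime}(d,\omega) - i\omega\,y_{2}(d,\omega)\bigr),
\]
so the problem reduces, via (\ref{y2}), to the behaviour of $c(x,\omega)$, $s(x,\omega)$ and their $x$-derivatives near $\omega = 0$. The NSBF representations (\ref{c}), (\ref{s}) together with $j_{m}(\omega x) = (\omega x)^{m}/(2m+1)!! + O(\omega^{m+2})$ show that $c(x,\omega)$ is entire and even in $\omega$ and that $s(x,\omega)/\omega$ is entire and even in $\omega$. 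Their values at $\omega = 0$ solve $Hy = 0$: $c(x,0)$ has initial data $(1,h)$, so with the canonical choice $h = i$ it equals $f(x) = f_{0}(x) + i f_{1}(x)$; the limit of $s(x,\omega)/\omega$ satisfies $y(0)=0$, $y'(0)=1$, so it equals $f_{1}$. The formal power $\varphi_{1}(x) = f(x) \int_{0}^{x} f^{-2}(s)\, ds$ satisfies the same homogeneous equation with the same initial data, so by uniqueness $\varphi_{1} \equiv f_{1}$ (in particular $\varphi_{1}$ is real, despite being built from the complex $f$).

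Substituting into (\ref{y2}) with $h = i$, the would-be singular term $\omega^{-1}s(x,\omega)$ cancels against $c - i s$ at leading order, yielding $y_{2}(d,\omega) = f_{0}(d) - i\omega f_{1}(d) + O(\omega^{2})$ and $y_{2}^{\prime}(d,\omega) = f_{0}^{\prime}(d) - i\omega \varphi_{1}^{\prime}(d) + O(\omega^{2})$. Multiplying out $e^{i\omega d}\bigl(y_{2}^{\prime}(d,\omega) - i\omega y_{2}(d,\omega)\bigr)$ and dividing by $2i\omega$, with $e^{i\omega d} = 1 + i\omega d + O(\omega^{2})$ producing the cross term $\omega d f_{0}^{\prime}(d)$, one arrives at (\ref{a(k) asympt f0}) directly. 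The pole characterization is then immediate: $a$ has a simple pole at $0$ iff $f_{0}^{\prime}(d) \neq 0$. A nontrivial Neumann solution of $Hy = 0$ on $[0,d]$ at eigenvalue zero must be proportional to $f_{0}$, so zero is a Neumann eigenvalue iff $f_{0}^{\prime}(d) = 0$; in the simple-pole case, $1/a(0) = 0$ gives (\ref{u1u2 origin}) trivially.

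In the Neumann case $f_{0}^{\prime}(d) = 0$, the formula (\ref{a(k) asympt f0 special}) follows at once from (\ref{a(k) asympt f0}). The non-vanishing of $f_{0}(d) + \varphi_{1}^{\prime}(d)$ comes from the constancy of the Wronskian $W(f_{0}, f_{1}) = f_{0} f_{1}^{\prime} - f_{0}^{\prime} f_{1} \equiv 1$ evaluated at $x = d$: with $f_{0}^{\prime}(d) = 0$ this yields $\varphi_{1}^{\prime}(d) = 1/f_{0}(d)$, whence $f_{0}(d) + \varphi_{1}^{\prime}(d) = (f_{0}^{2}(d) + 1)/f_{0}(d) \neq 0$. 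For the eigenfunctions I would expand (\ref{y1}) to order $\omega^{0}$: the coefficients $s^{\prime}(d,\omega) - i\omega s(d,\omega) = \omega/f_{0}(d) + O(\omega^{2})$ and $i\omega c(d,\omega) - c^{\prime}(d,\omega) = -i/f_{0}(d) + O(\omega)$ absorb the $\omega^{-1}$ prefactor and give $y_{1}(x,0) = f_{0}(x)/f_{0}(d)$; with $y_{2}(x,0) = f_{0}(x)$ already in hand, dividing by $\sqrt{2\pi}\, a(0) = -\sqrt{2\pi}(f_{0}^{2}(d)+1)/(2 f_{0}(d))$ produces (\ref{u1u2 origin special}). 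The only conceptually subtle point is the identification $\varphi_{1} = f_{1}$; the remainder is a careful Laurent expansion in $\omega$, and the uniform NSBF error estimates are not needed here since only the behaviour at $\omega = 0$ is being analyzed.
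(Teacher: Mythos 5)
Your proof is correct and follows essentially the same route as the paper: a small-$\omega$ expansion of $c$, $s$ and their derivatives, substituted into the Jost-solution formulas and the Wronskian expression for $a(\omega)$, followed by the Wronskian identity $f_0(d)f_1'(d)=1$ in the Neumann case. The only (harmless) cosmetic differences are that you evaluate the Wronskian at $x=d$ so that only $y_2$ enters the computation of $a(\omega)$, and you fix $h=i$ and use the identification $\varphi_1=f_1$ where the paper keeps $h$ general and instead invokes the invariance of $f(d)-h\varphi_1(d)$ from \cite{KT 2013}.
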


\begin{proof}
From the NSBF representations (\ref{c}), (\ref{s}) and (\ref{c prime (omega)}), (\ref{s prime (omega)}) of the solutions
$c(x,\omega)$ and $s(x,\omega)$  and their derivatives as well as from formulas from \cite{KNT 2015} we find that
\begin{align*}
c(x,\omega)&=f(x)+O(\omega^{2}),\qquad s(x,\omega)=\varphi_{1}(x)\omega
+O(\omega^{3}),\\
c'(x,\omega)& = f'(x)+O(\omega^2),\qquad s'(x,\omega) = \varphi_1'(x)\omega + O(\omega^3),
\qquad\text{when }\omega\rightarrow0
\end{align*}
and hence
\begin{align*}
y_{1}(x,\omega)  &  =f(x)\varphi_{1}^{\prime}(d)-f^{\prime}(d)\varphi_{1}(x)\\
&\quad   +i\omega\Bigl(  f(d)\varphi_{1}(x)-f(x)\varphi_{1}(d)+d\left(
f(x)\varphi_{1}^{\prime}(d)-f^{\prime}(d)\varphi_{1}(x)\right)  \Bigr)
+O(\omega^{2}),\\
y_{2}(x,\omega)&=f(x)-h\varphi_{1}(x)-i\varphi_{1}(x)\omega+O(\omega^{2}%
),\qquad\text{when }\omega\rightarrow0.
\end{align*}%
Thus, from (\ref{a(k)}) we obtain%
\begin{equation}
a(\omega)=\frac{f^{\prime}(d)-h\varphi_{1}^{\prime}(d)}{2i\omega}-\frac{1}%
{2}\Bigl(  f(d)-h\varphi_{1}(d)-d\left(  f^{\prime}(d)-h\varphi_{1}^{\prime
}(d)\right)  +\varphi_{1}^{\prime}(d)\Bigr)  +O(\omega)\quad\text{when
}\omega\rightarrow0. \label{a(k) asympt}%
\end{equation}
Notice that the magnitude $f(d)-h\varphi_{1}(d)$ is invariant with respect to
the choice of the particular solution $f(x)$ satisfying the condition
$f(0)=1$, see \cite[Proposition 4.7]{KT 2013}. In particular, for $f_{0}$ (\ref{a(k) asympt}) takes the form
(\ref{a(k) asympt f0}), from which we see that\ $a(\omega)$ has a simple pole
in the origin iff $f_{0}^{\prime}(d)\neq0$, that is, iff zero is not a Neumann
eigenvalue of the operator $H$ on $\left[  0,d\right]  $. In this case we
obtain (\ref{u1u2 origin}).

Consider the opposite situation when $f_{0}^{\prime}(d)=0$. It is easy to see
that in this case $f_{0}(d)+\varphi_{1}^{\prime}(d)\neq0$. Indeed, when
$f_{0}^{\prime}(d)=0$, we have that $\varphi_{1}^{\prime}(d)=1/f_{0}(d)$
($f_{0}(d)\neq0$ due to the uniqueness of the solution of the Cauchy problem).
The expression $f_{0}(d)+1/f_{0}(d)\neq0$ because otherwise $f_{0}(d)=\pm i$
which is impossible due to the real-valuedness of $q$. Thus, iff zero is a
Neumann eigenvalue of the operator $H$ on $\left[  0,d\right]  $ the
asymptotic behaviour of the transmission coefficient $a(\omega)$ when
$\omega\rightarrow0$ is described by (\ref{a(k) asympt f0 special}) where the
first term of the asymptotics is different from zero. Consequently, in this
case the relations (\ref{u1u2 origin special}) hold.
\end{proof}

\begin{example}
\label{Example}Consider the potential
\[
q(x)=
\begin{cases}
C, & x\in\left[  0,d\right] \\
0, & x\notin\left[  0,d\right]
\end{cases}
\]
where $C$ is a real constant. Take $h=0$. A simple calculation leads to the
following relations
\[
c(x,\omega)=\cos\alpha x,\qquad s(x,\omega)=\frac{\omega}{\alpha}\sin\alpha
x,
\]
where $\alpha:=\sqrt{\omega^{2}-C}$, and
\begin{align*}
y_{1}(x,\omega)&=e^{i\omega d}\left(  \cos\alpha\left(  d-x\right)
-\frac{i\omega}{\alpha}\sin\alpha\left(  d-x\right)  \right)  ,\\
y_{1}^{\prime}(x,\omega)&=e^{i\omega d}\left(  \alpha\sin\alpha\left(
d-x\right)  +i\omega\cos\alpha\left(  d-x\right)  \right)  ,\\
y_{2}(x,\omega)&=\cos\alpha x-\frac{i\omega}{\alpha}\sin\alpha x,\\
y_{2}^{\prime}(x,\omega)&=-\alpha\sin\alpha x-i\omega\cos\alpha x.
\end{align*}
Calculation of $a(\omega)$ according to \eqref{a(k)} gives us the following
expression%
\begin{equation}
a(\omega)=-e^{i\omega d}\left(  \cos\alpha d+\frac{\omega^{2}+\alpha^{2}%
}{2i\omega\alpha}\sin\alpha d\right)  . \label{a(k) Example}%
\end{equation}
From here as well as by means of \eqref{a(k) asympt f0} it can be seen that
$a(\omega)$ has a simple pole in the origin except when $\sin\left(  i\sqrt
{C}d\right)  =0$. This is possible if only $C<0$ and $d$ is such that
$\sqrt{-C}d=n\pi$, $n\in\mathbb{Z}$.

Note that with the aid of \eqref{G via Two} and \eqref{G via Two 2} the Green
function can be written in the form%
\begin{align*}
G(x,y;\omega^{2})&=\frac{e^{i\omega d}}{2i\omega a\left(  \omega\right)
}\left(  \cos\alpha\left(  d-x+y\right)  -\frac{i\omega}{\alpha}\sin
\alpha\left(  d-x+y\right)  \right.\\
 &\quad \left.-\frac{C}{\alpha^{2}}\sin\alpha\left(  d-x\right)  \sin\alpha
y\right)  ,\qquad y<x,
\end{align*}
which together with \eqref{G via Two 2} defines it for all values of $x$ and
$y$. Here $a\left(  \omega\right)  $ is defined by \eqref{a(k) Example}.
\end{example}

\section{Improving the convergence of the integrals in \eqref{Grepres}}\label{Sect improve}
The solutions $c(x,\omega)$ and $s(x,\omega)$ satisfying the initial conditions \eqref{cs init} possess the following asymptotics for real values of $\omega$ (see, e.g., \cite[Section 1.1]{Yurko}, see also formulas (5.2) and (5.3) from \cite{KNT 2015})
\begin{align}
c(x,\omega) &= \cos\omega x+ \left(h+\frac {Q(x)}2\right)\frac{\sin\omega x}{\omega}+o(1/\omega),\label{c(x,w) asympt}\\
c'(x,\omega) &= -\omega\sin\omega x+\left(h+\frac {Q(x)}2\right)\cos\omega x+o(1),\label{cp(x,w) asympt}\\
\displaybreak[2]
s(x,\omega) &= \sin\omega x- \frac {Q(x)}2\frac{\cos\omega x}{\omega} + o(1/\omega),\label{s(x,w) asympt}\\
s'(x,\omega) &= \omega\cos\omega x + \frac {Q(x)}2\sin\omega x + o(1),\qquad \omega\to\pm\infty,
\label{sp(x,w) asympt}
\end{align}
where
\[
Q(x):= \int_0^x q(t)\,dt.
\]
For absolutely continuous on $[0,d]$ potentials the error terms $o(1/\omega)$ and $o(1)$ can be changed to $O(1/\omega^2)$ and $O(1/\omega)$, respectively.
Using this asymptotics  as well as \eqref{a(w) asympt} and formulas \eqref{y1} and \eqref{y2} one can verify that the integrands from \eqref{Grepres} possess the following asymptotics
\begin{equation}\label{integrand asympt}
    \begin{split}
    u_1(x,\omega)\overline{u_1(y,\omega)}+u_2(x,\omega)\overline{u_2(y,\omega)} & = \frac{y_1(x,\omega)\overline{y_1(y,\omega)}+y_2(x,\omega)\overline{y_2(y,\omega)}}{2\pi |a(\omega)|^2}\\
    & = \frac{2\cos\omega(x-y) + \frac{(Q(x)-Q(y))\sin\omega(x-y)}\omega+o(1/\omega)}{2\pi |a(\omega)|^2} \\
    & = \frac { \cos \omega(x-y)}\pi+\frac{(Q(x)-Q(y))\sin\omega(x-y)}{2\pi\omega}+
    o\left(\frac 1\omega\right),\\
    &\quad\quad \omega\to+\infty.
    \end{split}
\end{equation}
Again, for absolutely continuous on $[0,d]$ potentials the error term can be changed to $O(1/\omega^2)$.

The asymptotic relation \eqref{integrand asympt} explains the slow convergence of the integrals in \eqref{Grepres}, the integrands decay as $1/\omega^2$ as $\omega\to+\infty$. At the same time, it provides the way to improve the convergence rate. Note that (c.f., formulas 2.5.9.5, 2.5.9.10 and 2.5.9.11 from \cite{Prudnikov})
\begin{equation*}\label{Green unperturbed}
    \int_0^\infty \frac{\cos\omega(x-y)}{\pi(\omega^2-\lambda)}d\omega = -\frac{e^{i|x-y|\sqrt{\lambda}}}{2i\sqrt{\lambda}},
\end{equation*}
where the branch for $\sqrt{\lambda}$ is chosen so that $\operatorname{Im}\sqrt{\lambda} > 0$ (such branch is uniquely defined since the formula \eqref{Grepres} can be used only for $\lambda\not\in [0,\infty)$), and
\begin{equation*}\label{Second term}
    \begin{split}
        \int_0^\infty \frac{\sin\omega|x-y|}{2\pi\omega(\omega^2-\lambda)}d\omega & = \int_0^\infty \frac{(\omega^2-\lambda)\sin\omega|x-y|}{2\pi\omega(\omega^2-\lambda)^2}d\omega\\
        &= \int_0^\infty \frac{\omega\sin\omega|x-y|}{2\pi(\omega^2-\lambda)^2}d\omega - \int_0^\infty \frac{\lambda\sin\omega|x-y|}{2\pi\omega(\omega^2-\lambda)^2}d\omega\\
        &=-\frac{|x-y|e^{i|x-y|\sqrt{\lambda}}}{8i\sqrt{\lambda}} - \frac 1{4\lambda} + \frac{e^{i|x-y|\sqrt{\lambda}}}{4\lambda} + \frac{|x-y|e^{i|x-y|\sqrt{\lambda}}}{8i\sqrt{\lambda}} \\
        &= \frac{e^{i|x-y|\sqrt{\lambda}} - 1}{4\lambda}.
    \end{split}
\end{equation*}
Hence we may rewrite the formula \eqref{Grepres} either as
\begin{equation}\label{Grepres2}
\begin{split}
    G(x,y;\lambda) &= \sum_{j=1}^{m}\frac{v_{j}(x)\overline{v_{j}(y)}}{\lambda
_{j}-\lambda}-\frac{e^{i|x-y|\sqrt{\lambda}}}{2i\sqrt{\lambda}}\\
&\quad +
\int_{0}^{\infty}\frac{u_{1}(x,\omega)\overline{u_{1}(y,\omega
)}+u_{2}(x,\omega)\overline{u_{2}(y,\omega)} - \frac 1\pi\cos\omega(x-y)}{\omega^{2}-\lambda}d\omega
\end{split}
\end{equation}
or as
\begin{equation}\label{Grepres3}
\begin{split}
    G(x,y;\lambda) &= \sum_{j=1}^{m}\frac{v_{j}(x)\overline{v_{j}(y)}}{\lambda
_{j}-\lambda}-\frac{e^{i|x-y|\sqrt{\lambda}}}{2i\sqrt{\lambda}}+\frac{e^{i|x-y|\sqrt{\lambda}} - 1}{4\lambda}\cdot\frac{x-y}{|x-y|}\bigl(Q(x)-Q(y)\bigr)\\
&\quad +
\int_{0}^{\infty}\frac{u_{1}(x,\omega)\overline{u_{1}(y,\omega
)}+u_{2}(x,\omega)\overline{u_{2}(y,\omega)} - \frac{2\omega\cos\omega(x-y)+ (Q(x)-Q(y))\sin\omega(x-y)}{2\pi\omega}}{\omega^{2}-\lambda}d\omega.
\end{split}
\end{equation}
Now the integrand in \eqref{Grepres2} can be estimated by the expression $1/\omega^3$ as $\omega\to+\infty$, while in \eqref{Grepres3} (for absolutely continuous on $[0,d]$ potentials) by the expression $1/\omega^4$  resulting in a faster integral's convergence. We give a numerical illustration in Section \ref{SectComputations}.

\section{Numerical implementation\label{SectComputations}}

The procedure for the computation of the Green function based on the NSBF
representations was implemented using Matlab 2017a.
The first step is to compute a nonvanishing solution $f$ of (\ref{SLhom})
together with its first derivative (see \cite{KrCV08}, \cite{KrPorter2010} for
 details). Then the systems of formal powers (\ref{phik}) and (\ref{psik})
associated to $f$ are computed. Next, a number of the functions $\beta_{k}$,
and $\gamma_{k}$ are computed according to Remark \ref{ComputationBetaGamma} which lead to the approximate NSBF representations
$c_{N}(x,\omega)$ and $s_{N}(x,\omega)$ of solutions (see (\ref{cN}) and
(\ref{sN})) as well as of their derivatives (\ref{cN prime}) and
(\ref{sN prime}). Then the Jost solutions $y_{1}(x,\omega)$ and $y_{2}%
(x,\omega)$ can be computed using (\ref{y1}) and (\ref{y2}), which allow one
to compute $u_{1}(x,\omega)$ and $u_{2}(x,\omega)$ and hence $G(x,y;\lambda)$
with the aid of (\ref{Grepres3}). Since the main difficulty usually consists in computing the integrals in (\ref{Grepres3}), we consider an example void of a discrete spectrum.

Notice that to compute the Green function for any value of $\lambda
=:\omega_{c}^{2}$ not belonging to the spectrum of $H$ the integration in
formula (\ref{Grepres3}) requires the solutions $u_{1}(x,\omega)$ and
$u_{2}(x,\omega)$ to be computed for $\omega\in\lbrack0,\infty)$ only. Here is
where the independence with respect to $\omega\in\mathbb{R}$ of the estimate
of the approximation (see (\ref{estc1})) becomes especially attractive.

Computations were performed for the case considered in Example \ref{Example}
with $C=1$ and $d=1$. On the segment $x\in\lbrack0,d]$ equally spaced $1001$
points have been chosen for the representation of all the functions involved, $30$ formal powers have been computed for the
construction of $f$, and $N$ was set as $12$ (for details see \cite[Remark
7.1]{KNT 2015}).


First, we illustrate the slow convergence of the integrals in (\ref{Grepres}) and compare the improvements achieved by using \eqref{Grepres2} and \eqref{Grepres3}. The numerical integration was performed by truncating the integrals for $\omega = \Omega$, $\Omega\le 10^{5}$ and using the six points Newton-Cotes method of
seventh order and the step of
$0.01$. The Green function was computed for two cases, $x=0.97$, $y = 0.069$ (distant points $x$ and $y$) and
$x=0.97$, $y=0.969$ (close points $x$ and $y$) and $\lambda = -4$.

The convergence of the integrals in (\ref{Grepres}) is slow as can be seen on
Fig.\ \ref{Figure1Convergence}. For distant points $x$ and $y$ the second improvement formula \eqref{Grepres3} is considerably better giving 2 orders of $\omega$ faster decay of the integral truncation error. However for close values of $x$ and $y$ one can use the first improvement formula \eqref{Grepres2} as well, thanks to the smallness of the contribution of the second asymptotic term in \eqref{integrand asympt}. In both cases the error stabilizes on the values of the truncation parameter $\Omega$ of about $10^4$, which we will use for all further computations.

\begin{figure}
[ptb]
\centering
\begin{tabular}{cc}
\includegraphics[
height=2.5in,
width=3in
]{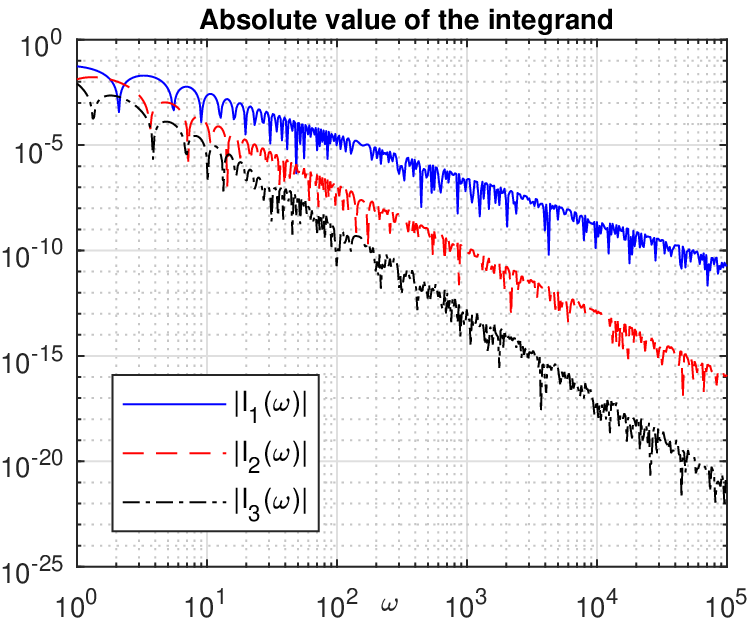} & \includegraphics[
height=2.5in,
width=3in
]{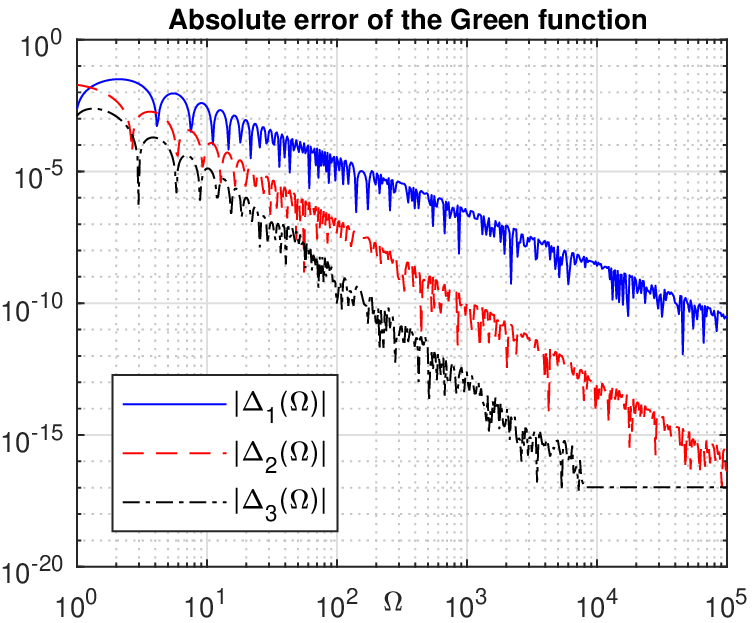}\\
\multicolumn{2}{c}{$x=0.97$, $y=0.069$}\\
\includegraphics[
height=2.5in,
width=3in
]{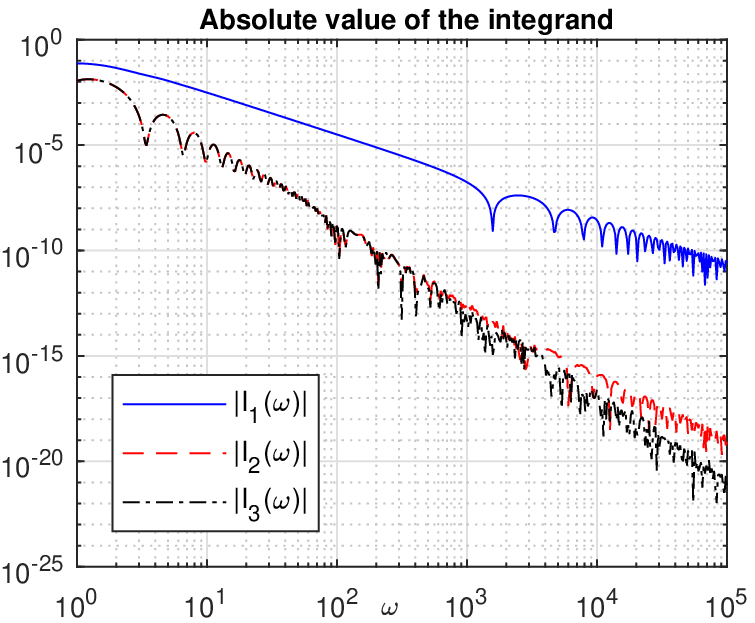}& \includegraphics[
height=2.5in,
width=3in
]{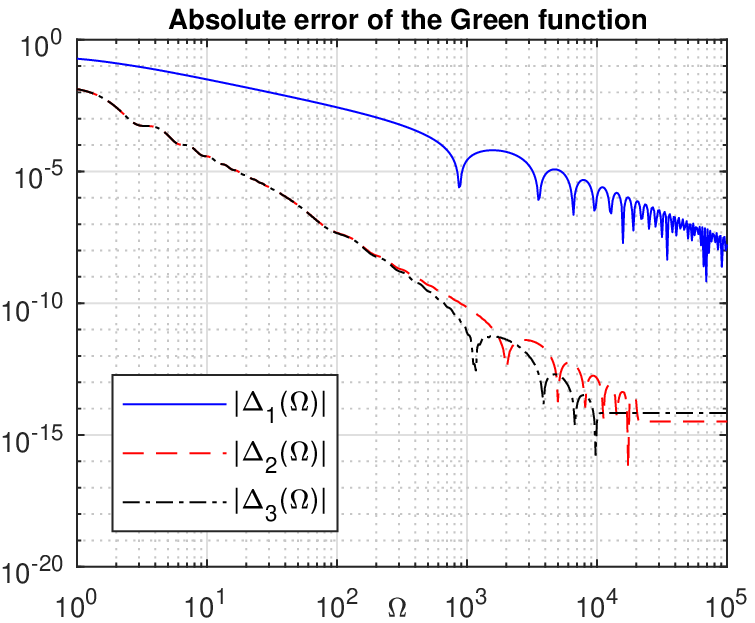}\\
\multicolumn{2}{c}{$x=0.97$, $y=0.969$}
\end{tabular}
\caption{Top left: absolute values of the integrands ($I_1$ -- the sum of the integrands from \eqref{Grepres}, $I_2$ -- the integrand from \eqref{Grepres2} and $I_3$ -- the integrand from \eqref{Grepres3}) for distant points $x=0.97$, $y = 0.069$. Bottom left: absolute values of the same integrands for close points $x=0.97$, $y = 0.969$.
Top right: absolute errors of the computed Green function for distant points $x=0.97$, $y = 0.069$ ($\Delta_1$ -- using formula \eqref{Grepres}, $\Delta_2$ -- using formula \eqref{Grepres2} and $\Delta_3$ -- using formula \eqref{Grepres3}) as the functions of the upper limit $\Omega$ chosen to truncate the integrals. Bottom right: same absolute errors of the computed Green function for close points $x=0.97$, $y = 0.969$.}
\label{Figure1Convergence}
\end{figure}

The absolute error dependence on $x$ for the NSBF method is shown on Figure
\ref{FigAbsErrNSBF} achieving its maximum in the neighborhood of the point
$x=y$.%

\begin{figure}
[ptb]
\centering
\includegraphics[
height=3in,
width=5in,
bb=0 0 360 216
]%
{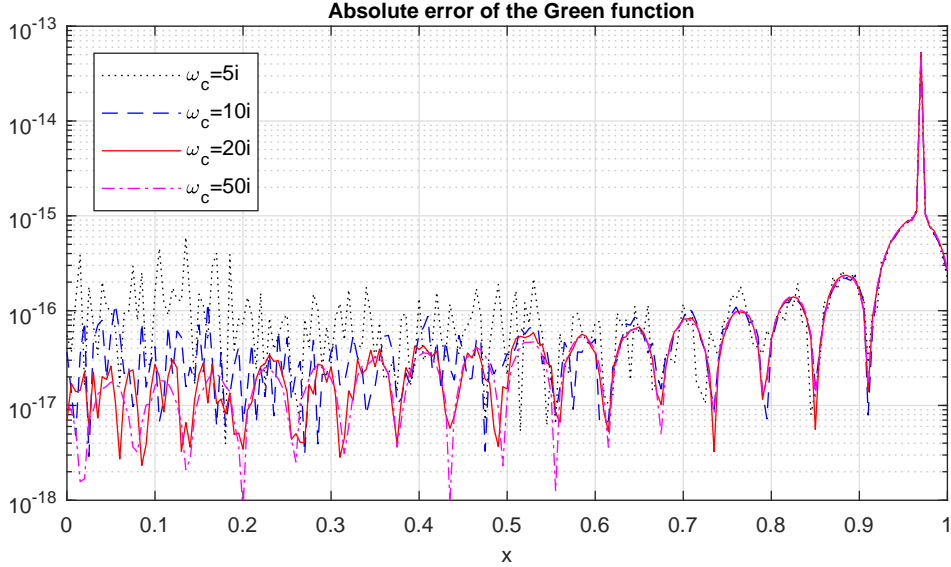}%
\caption{Absolute error of the Green function
$G(x,0.97;\omega_{c}^{2})$ computed using \eqref{Grepres3} for different values of $\omega_{c}$.}
\label{FigAbsErrNSBF}
\end{figure}

Finally, the maximum absolute error for the Green function computed on a domain of the complex plane of the variable $\omega_{c}$ is shown on Figure
\ref{FigAbsErrNSBFComplex}. As can be seen the absolute error of the approximation obtained with the aid of the method proposed here and based on (\ref{Grepres3}) and NSBF representations is uniform with the exception of a small neighborhood of $\omega_c=0$.

For comparison the Green function was also computed by formulas (\ref{G via Two}%
) and (\ref{G via Two 2}) with the aid of Matlab's \texttt{ode45} solver. Note that the computation of the solution $y_1$ by the Matlab solver requires some care. First, since the solution $y_1$ is decreasing, it should be computed ``from right to left'' starting from the initial conditions \eqref{y1IC}. Second, the initial value $e^{i\omega d}$ is very small for the values of $\omega$ having large positive imaginary part. In order to obtain a good accuracy from the \texttt{ode45} solver, one either has to specify a very small (say, $10^{-15}\cdot e^{-\operatorname{Im}\omega d}$) value for the \texttt{`AbsTol'} parameter or to rescale the initial value, setting $\tilde y(d) = 1$, $\tilde y'(d) = i\omega$ and then multiplying the returned function by $e^{i\omega d}$.
Once the computation was performed correctly, the maximum absolute errors of the computed Green function by the proposed method and by the \texttt{ode45} solver were comparable.

\begin{figure}
[ptb]
\centering
\includegraphics[
height=3in,
width=5in,
bb=0 0 360 216
]%
{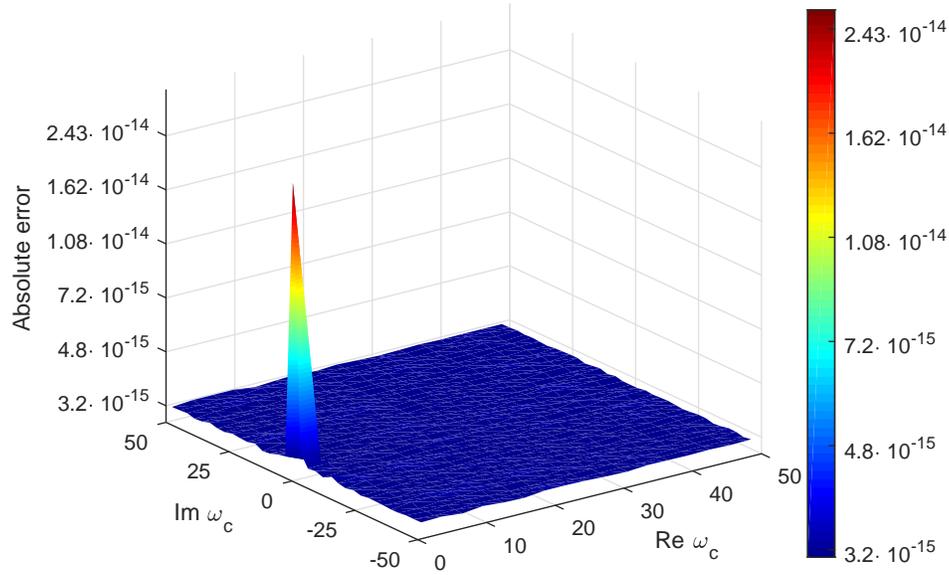}%
\caption{The maximum with respect to $x$ absolute error of the Green function
$G(x,0.97;\omega_{c}^{2})$ computed using \eqref{Grepres3} on a domain of the complex
plane of the variable $\omega_{c}$.}
\label{FigAbsErrNSBFComplex}
\end{figure}

\section*{Conclusions}

A method for computation of the Green function for the one-dimensional
Schr\"{o}dinger equation is proposed based on the eigenfunction expansion of
the Green function and the NSBF representations of the generalized
eigenfunctions. It proved to be a stable and accurate method comparable with the representation in terms of two Jost solutions computed by
the \texttt{ode45} solver provided by Matlab.
%

\section*{Acknowledgements}

R.\ Castillo would like to thank the support of Instituto Polit\'{e}cnico
Nacional through a sabbatical year.
Research of V.\ Kravchenko and S.\ Torba was partially supported by CONACYT,
Mexico via the projects 284470 and 222478.

\end{document}